\numberwithin{equation}{section}
\renewcommand{\l}{\left}
\renewcommand{\r}{\right}
\renewcommand{\mod}{\,\mathrm{mod}\,}
\def \C{\mathbb{C}}
\def \M2{\mathrm{M}_2}
\def \R{\mathbb{R}}
\def \T{\mathbb{T}}
\def \Z{\mathbb{Z}}
\def \sl2r{\mathrm{SL}(2,\R)}
\def \dirint {\int_{\T}^{\oplus}}
\def \dirintd {\int_{\T^d}^{\oplus}}
\newcommand{\beq}{\begin{equation}}
\newcommand{\eeq}{\end{equation}}
\def\wt{\widetilde}
\def\wt{\widetilde}
\newcommand{\eqdef}{\stackrel{\rm def}{=\kern-3.6pt=}}
\theoremstyle{plain}
\newtheorem{theorem}{\bf Theorem}[section]
\newtheorem{prop}[theorem]{\bf Proposition}
\newtheorem{cor}[theorem]{\bf Corollary}
\theoremstyle{definition}
\theoremstyle{remark}
\newtheorem{remark}[theorem]{\bf Remark}
\renewcommand{\ge}{\geqslant}
\renewcommand{\qed}{\vrule height7pt width5pt depth0pt}
\title[$L^2$ reducibility]{$L^2$-reducibility and localization for quasiperiodic operators}
\author[S. Jitomirskaya]{Svetlana Jitomirskaya}
\address{Department of Mathematics,
	University of California,
	Irvine, CA, 92697--3875,
	United States of America}
\email{szhitomi@math.uci.edu}
\author[I. Kachkovskiy]{Ilya Kachkovskiy}
\address{Department of Mathematics,
	University of California,
	Irvine, CA, 92697--3875,
	United States of America}
\email{ikachkov@uci.edu}
\begin{document}
\maketitle
\begin{abstract}
	We give a simple argument that if a quasiperiodic multi-frequency Schr\"odinger
        cocycle is reducible to a constant rotation for almost all
        energies with respect to the density of states measure, then
        the spectrum of the dual operator is purely point for Lebesgue
        almost all values of the ergodic parameter $\theta$. The
        result holds in the $L^2$ setting provided, in addition, that
        the conjugation preserves the fibered rotation
        number. Corollaries include localization for (long-range)  1D analytic
        potentials with dual ac spectrum and Diophantine frequency as
        well as a new result on multidimensional localization.
\end{abstract}
\section{Introduction}
The relation between spectral properties of discrete quasiperiodic
Schr\"odinger operators and their duals, known as Aubry duality, has a long history
starting from \cite{AA}.  For nice enough $v$,  sufficiently regular normalizable
solutions  of $H\psi = E\psi$ with 
%  It was expected that absolutely continuous spectra of quasiperiodic Schr\"odinger operators
\begin{equation}\label{1}
(H(x)\psi)_n=\psi_{n+1}+\psi_{n-1}+v(x+n\alpha)\psi_n
\end{equation}
%should correspond to pure point spectra (localization) for the dual
%long-range Hamiltonians
yield Bloch wave solutions of $\wt H\psi = E\psi$ for
\begin{equation}\label{dual}
(\wt H(\theta)\psi)_m=\sum\limits_{m'\in \Z}\hat v_{m'} \psi_{m-m'}+2\cos 2\pi(\alpha m+\theta)\psi_m.
\end{equation}
where
$$
v(x)=\sum_{k\in\Z}\hat{v}_k e^{2\pi i k x}.
$$
%define \hat v_m 
Conversely, nice enough Bloch waves yield normalizable
eigenfunctions.  There was an expectation stemming from \cite{AA} that this should
translate into the duality between pure point and absolutely continuous
spectra for $H$ and $\wt H$. The fact that such correspondence in the
direction  ``absolutely continuous spectrum for $H$ implies pure point
spectrum for $\wt H$'' could be  false even for the nicest of $v$ was first understood in
\cite{last} which was a surprise at the time. Y. Last showed that for
the  almost Mathieu operator  $H_\lambda$ defined
by \eqref{1} with $v(x)=2\lambda \cos 2\pi x$ and
Liouville $\alpha$ there is absolutely continuous component in the
spectrum of $H$ for $\lambda<1,$ while $\wt H$ has purely singular
continuous spectrum. By now it is known that for $\lambda<1$ the
spectrum of $H$ is purely absolutely continuous for all $x,\alpha$,
while for the corresponding $\wt H$ there is an interesting phase
diagram of singular continuous and pure point phases depending on the
arithmetic properties of both $\alpha$ and $x$ \cite{AYZ, jitsim}. It should be noted that the  almost Mathieu family
$\{H_\lambda\}_{\lambda>0}$ is self-dual with $\wt H_\lambda = \lambda H_{1/\lambda}$.
%, thus establishing a duality  between subcritical $(\lambda<1)$ and supercritical
%$(\lambda>1)$ regimes. 

% Precise operator-theoretic 
% version of duality was developed in \cite{GJLS},

A more refined notion than absolutely continuous spectrum can be given in terms of 
reducibility of the corresponding Schr\"odinger cocycle, see \cite{Eli}. Reducibility, for instance, 
does imply that the generalized eigenfunctions behave as Bloch
waves.

The duality has been explored at many levels, from physics motivated
gauge invariance background \cite{mz}, to operator-theoretic
\cite{GJLS}, to quantitative \cite{AJ}. The main recent thrust, however, has
been in establishing absolute continuity of the spectrum of $H$ \cite{BJ}, reducibility \cite{Puig}, or more subtle property 
called almost reducibility \cite{AJ}, by proving Anderson localization 
(or almost localization) for the dual operator $\wt H.$ This was
motivated by the development of non-perturbative localization methods,
starting with \cite{Lana}, that led to duality-based
reducibility conclusions without the use of  KAM and thus allowing for
much larger ranges of parameters. 

% A more refined description of absolutely continuous spectrum can be given in terms of 
% reducibility of the corresponding Schr\"odinger cocycle, see \cite{Eli}. Reducibility, for instance, 
% does imply that the generalized eigenfunctions behave as Bloch
% waves. Since for $v$ other than the $\cos$ the operator $\wt H$ is
% long-range, the main thrust of duality-type arguments has been making  $H(x)$ has been the 
% object of primary consideration (rather than $\wt H(\theta)$), there was some interest in establishing absolute 
% continuity of the spectrum \cite{BJ}, reducibility \cite{Puig}, or more subtle property 
% called almost reducibility \cite{AJ} by proving Anderson localization 
% (or almost localization) for the dual model. Another reason was that 
% non-perturbative localization methods were developed sufficiently early, see, for example, \cite{Lana}.

However, recent celebrated results such as \cite{AK1,AFK} provide
methods to establish non-perturbative reducibility 
directly and independently of localization for the dual model. Thus, a
natural question arises: does reducibility of the Schr\"odinger
cocycles imply localization for $\wt H(\theta)$? Since nice
enough reducibility directly yields nicely decaying eigenfunctions for
the dual model, the main question is that of completeness. This has
been a stumbling block in corresponding arguments, e.g. \cite{YQ}. The
question is non-trivial even for the almost Mathieu family, where the
conjectured regime of localization for a.e. $x$ has been
$e^\beta<\lambda$ where $\beta$ is the upper rate of exponential
growth of denominators of continued fractions approximants to $\alpha$
\cite{lanaICM94}, yet direct localization arguments have stumbled upon technical
difficulties for $\lambda<e^{3/2\beta}$, see \cite{liu,tenmart}.  At the
same time, in the recent preprint \cite{AYZ}, the Almost Reducibility Conjecture, recently resolved by Avila
(see \cite{arc} where the $\beta>0$ case that is needed for \cite{AYZ} is presented),
is combined with the technique from \cite{AFK,HY,YQ} 
to establish reducibility for the dual of the entire $e^\beta<\lambda$ region, 
thus leading to the above question of
completeness of the dual solutions. The authors of \cite{AYZ} use
certain quantitative information on the model, 
in particular, the existence of a large collection of eigenfunctions with
prescribed rate of decay to obtain their completeness, which, ultimately, implies arithmetic 
phase transitions for the almost Mathieu operator.

In the present paper, we obtain
an elementary proof of complete localization 
for the dual model under the assumption of $L^2$-reducibility
of the Schr\"odinger cocycle for $H(x)$ for almost all energies with
respect to the density of states measure (see Theorem \ref{main}). 
We do
not make any further assumptions on $H(x)$ or it's  spectra, however, we require the $L^2$ conjugation to preserve the rotation number of the 
Schr\"odinger cocycle (which is always possible if the conjugation is continuous).

Our result implies a simple proof of 
localization as a corollary of dual reducibility for the almost Mathieu operator throughout the entire
$e^\beta<\lambda$ region (first proved in \cite{AYZ}).
Also, if the spectrum of $H(x)$ is purely absolutely continuous, $v$ is analytic, and $\alpha$ is Diophantine, the assumptions of Theorem \ref{main} automatically hold due to \cite{AFK}, which gives a new interpretation of duality in the direction 
from absolutely continuous spectrum to localization, as originally
envisioned. The argument applies to the multi-frequency case, thus
allowing some conclusions on multidimensional localization (see Theorem
\ref{multidim}). It has further implications to the quasiperiodic XY spin chain, to be
explored in a forthcoming paper \cite{IKXY}.

\section{Preliminaries: Schr\"odinger cocycles, rotation number and
  duality}
\noindent Let
$$
(H(x)\psi)_n=\psi_{n+1}+\psi_{n-1}+v(x+n\alpha)\psi_n, 
$$
where $n\in \Z$, $x=(x_1,\ldots,x_d),\alpha=(\alpha_1,\ldots,\alpha_d)\in \T^d$, and $\alpha$ is incommensurate in the sense that $\{1,\alpha_1,\ldots,\alpha_d\}$ are linearly independent over $\mathbb Q$. 
The family $\{H(x)\}_{x\in \T^d} $ is called a $d$-frequency one-dimensional quasiperiodic operator family.
We will identify $\T^d$ with $[0,1)^d$, and continuous functions on $\T^d$ with continuous $\Z^d$-periodic 
functions on $\R^d$. The eigenvalue equation
\beq
\label{eiv_eq}
\psi_{n+1}+\psi_{n-1}+ v(x+n\alpha) \psi_n =E\psi_n
\eeq
can be written in the following form involving transfer matrices,
$$
\begin{pmatrix}\psi_n\\ \psi_{n-1}\end{pmatrix}=\l(\prod_{j=n-1}^{0}S_{v,E}(x+j\alpha)\r)\begin{pmatrix}\psi_{0}\\ \psi_{-1}\end{pmatrix},
$$
where
$$
S_{v,E}(x)=\begin{pmatrix}
E-v(x)&-1\\
1&0
\end{pmatrix},
$$
and the pair $(\alpha,S_{v,E})$ is a Schr\"odinger cocycle understood as a map $(\alpha,S_{v,E}):\T^d\times \C^2\to
\T^d\times \C^2$ given by
$(\alpha,S_{v,E}):(x,w) \mapsto (x+\alpha,S_{v,E}(x) \cdot
w)$. Replacing $S_{v,E}$ with $A \in \sl2r$ gives a definition of an $\sl2r$-cocycle.
% The cocycle iterates are given by $(\alpha,A)^n=(n \alpha,A_n)$, where the
% $A_n$ are given by
% \be
% A_n(x)=\prod_{j=n-1}^0 A(x+j\alpha).

Suppose that $A$ is an $\sl2r$-cocycle homotopic to the identity (for
example, a Schr\"odinger cocycle). Then there exist continuous functions $w\colon \T^d\times\T\to \R$ and $u\colon \T^d\times\T\to \R^+$ such that
$$
A(x)\begin{pmatrix}\ \cos 2\pi y\\ \sin 2\pi y\end{pmatrix}
=u(x,y)\begin{pmatrix}\ \cos 2\pi (y+w(x,y))\\ \sin 2\pi (y+w(x,y))\end{pmatrix}.
$$
Let $\mu$ be a probability measure invariant under transformation of $\T^d\times\T$ defined by $(x,y)\mapsto (x+\alpha,y+w(x,y))$ such that it projects to Lebesgue measure over the first coordinate. Then the {\it fibered rotation number} of $(\alpha,A)$ is defined as
$$
\rho(\alpha,A)=\l(\int_{\T^d\times\T} w\,d\mu\r)\,\mod\,\Z.
$$
If $A=S_{v,E}$, then we will denote its rotation number by $\rho(E)$, ignoring the dependence on $\alpha$ and $v$.

If $H(\cdot)$ is  a quasiperiodic operator family, and $E_{H(x)}(\Delta)$ are the
corresponding spectral projections, then the density of states measure 
is defined as
$$
N(\Delta)=\int_{\T^d}(E_{H(x)}(\Delta)\delta_0,\delta_0)\,dx,
$$
$\Delta\subset \R$ is a Borel set. The distribution function of this
measure $N(E)=N((-\infty,E))=N((-\infty,E])$ is called the {\it
  integrated density of states} (IDS). A remarkable relation (see, for
example, \cite{JM,AS} for one-frequency case and
\cite{DelSou} for the general case) is that for Schr\"odinger
cocycles we have $N(E)=1-2\rho(E)$, and $\rho$ is a continuous
function mapping $\Sigma$ onto $[0,1/2]$, where $\Sigma=\sigma(H(x))$
is the spectrum which is known to be independent of $x$. The following
holds \cite{JM,DelSou,H}.
%is proved in \cite{H} in a general context of uniformly hyperbolic cocycles.
\begin{prop}
\label{rational}
Suppose that $E\notin\Sigma$. Then $2\rho(E)\in \alpha_1 \Z+\alpha_2\Z+\ldots+\alpha_d \Z+\Z$.
\end{prop}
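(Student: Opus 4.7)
The plan is to use the characterization that $E \notin \Sigma$ is equivalent (by Johnson's theorem, with the multi-frequency extension from \cite{DelSou}) to uniform hyperbolicity of the Schr\"odinger cocycle $(\alpha, S_{v,E})$. This yields a continuous invariant splitting $\R^2 = L^s(x) \oplus L^u(x)$ over $\T^d$, and in particular, a continuous section $\phi\colon \T^d \to \R P^1 = \R/(\tfrac12\Z)$ describing the stable line field, satisfying $S_{v,E}(x) \cdot \phi(x) \equiv \phi(x+\alpha) \mod \tfrac12$.

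Next I would lift $\phi$ to a continuous function $\tilde\phi\colon \R^d \to \R$. Since $\phi$ is $\Z^d$-periodic modulo $\tfrac12$, this lift satisfies $\tilde\phi(x + e_i) = \tilde\phi(x) + m_i/2$ for some integers $m_1, \ldots, m_d$, which are the winding degrees of the line field along the coordinate cycles. Then $F(x) = \tilde\phi(x+\alpha) - \tilde\phi(x)$ is genuinely $\Z^d$-periodic, and a coordinate-by-coordinate change of variables $x_i \mapsto x_i + \alpha_i$, combined with the above periodicity of $\tilde\phi$, yields the identity
$$
\int_{\T^d} F(x)\, dx = \frac{1}{2}\sum_{i=1}^d m_i \alpha_i.
$$

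To conclude, I would connect this integral with $\rho(E)$. By invariance of the stable direction and the fact that $S_{v,E}$ is homotopic to the identity, the continuous lift of $w$ satisfies $w(x, \tilde\phi(x)) = F(x)$. The same holds at $y = \tilde\phi(x) + \tfrac12$ because linearity of $S_{v,E}(x)$ forces $w$ to be $\tfrac12$-periodic in $y$. The invariant measure $\mu$ of the uniformly hyperbolic cocycle is supported on the two sheets $\{y \equiv \tilde\phi(x) \mod 1\}$ and $\{y \equiv \tilde\phi(x) + \tfrac12 \mod 1\}$, each with weight $\tfrac12$, so
$$
\rho(E) = \int_{\T^d\times\T} w \, d\mu = \int_{\T^d} F(x)\, dx = \frac{1}{2}\sum_{i=1}^d m_i\alpha_i \mod \Z,
$$
and multiplication by $2$ gives $2\rho(E) \in \sum_{i=1}^d m_i \alpha_i + \Z$. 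The main technical subtlety I expect is the orientation bookkeeping: the stable line field is a section of an $\R P^1$-bundle while $w$ is defined via oriented angles in $\R/\Z$, so one must carefully match the factor of $2$ (or equivalently work on the orientation double cover). Everything else --- Johnson's characterization, existence and continuity of invariant line fields under uniform hyperbolicity, and identification of $\mu$ with the symmetrized graph of $\phi$ --- is either classical or a routine consequence of the contraction estimates.
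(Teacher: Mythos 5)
Your argument is correct and is essentially the standard gap-labeling proof that the paper itself does not spell out but delegates to the cited references \cite{JM,DelSou,H}: uniform hyperbolicity off the spectrum (Johnson's theorem), winding numbers $m_i$ of the continuous invariant line field, and evaluation of the fibered rotation number on an invariant measure carried by that line field. The one step stated too strongly --- that $w(x,\tilde\phi(x))=F(x)$ exactly, rather than up to an additive constant in $\tfrac12\Z$ coming from the choice of branch of $w$ and of lift of the line field --- is precisely the orientation bookkeeping you flag, and it is harmless because the discrepancy disappears upon multiplying by $2$ and reducing mod $\Z$.
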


{\it Aubry duality} is a relation between spectral properties of $H(x)$ and the dual Hamiltonian
(\ref{dual}). We will formulate it in the multi-frequency form, with the dual operator (\ref{dual}) 
now being an operator in $l^2(\Z^d)$: 
$$
(\wt H(\theta)\psi)_m=\sum\limits_{m'\in \Z^d}\hat v_{m'} \psi_{m-m'}+2\cos 2\pi(\alpha\cdot m+\theta)\psi_m,
$$
where
$$
v(x)=\sum_{k\in\Z}\hat{v}_k e^{2\pi i k\cdot x}.
$$
Denote the corresponding direct integral spaces (for $H$ and $\wt H$ respectively) by
$$
\mathfrak{H}:=\dirintd l^2(\Z)\,dx,\quad \widetilde{\mathfrak{H}}= {\int_{\T}^{\oplus}l^2(\Z^d)}\,d\theta.
$$
Consider the unitary operator $\mathcal U\colon \mathfrak{H}\to \widetilde{\mathfrak{H}}$ defined on vector functions $\Psi=\Psi(x,n)$ as
$$
(\mathcal {U} \Psi)(\theta,m)=\hat{\Psi}(m,\theta+\alpha\cdot m),
$$
where $\hat\Psi$ denotes the Fourier transform over $x\in \T^d\to m\in \Z^d$ combined with the inverse Fourier transform $n\in \Z\to \theta\in \T$. Let also
$$
\mathcal{H}:=\dirintd H(x)\,dx,\quad \wt{\mathcal{H}}:=\dirint \wt H(\theta)\,d\theta.
$$
Aubry duality can be formulated as the following 
equality of direct integrals.
\beq
\label{Duality}
\mathcal{U} \mathcal H \mathcal U^{-1}=\wt{\mathcal H}.
\eeq 
It is well known (see, for example, \cite{GJLS}) that the spectra of $H(x)$ and $\wt H(\theta)$ coincide for all $x,\theta$. We denote them both by $\Sigma$. 
Moreover, the IDS of the families $H$ and $\wt H$ also
coincide.\footnote{While the proof in \cite{GJLS} is claimed for a more specific
  one-frequency family, it applies without any changes in the mentioned generality.}

A continuous cocycle $(\alpha,A)$ is called $L^2$-{\it reducible} if there exists a matrix function $B\in L^2(\T^d;\mathrm{SL}(2,\R))$ and a (constant) matrix $A_{\star}\in \sl2r$ such that
\beq
\label{reducibility}
B(x+\alpha)A(x)B(x)^{-1}=A_{\star},\quad \text{for a. e.}\quad x\in \T^d.
\eeq
We will call a cocycle $A$ {\it $L^2$-degree $0$ reducible}, if \eqref{reducibility} holds with (possibly complex) $B$ with $|\det B(x)|=1$ and
\beq
\label{deg0}
A_{\star}=\begin{pmatrix}e^{2\pi i \rho(E)}&0\\ 0&e^{-2\pi i \rho(E)} \end{pmatrix}.
\eeq
\section{Obtaining localization by duality}
Our main result is as follows.
\begin{theorem}
	\label{main}
	Suppose that $H(x)$ is a continuous quasiperiodic operator family such that for almost all $E\in \Sigma$ with respect to the density of states measure the cocycle $S_{v,E}$ is $L^2$-degree $0$ reducible. Then the spectra of dual Hamiltonians $\wt H(\theta)$ are purely point for almost all $\theta\in\T$.
\end{theorem}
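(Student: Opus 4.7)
The plan is to turn each $L^2$-degree $0$ conjugation into an explicit $l^2(\Z^d)$-eigenfunction of the dual Hamiltonian and then to show, via one integration identity, that these eigenfunctions exhaust the spectral measure at $\delta_0$ for a.e.\ $\theta$. To construct the eigenfunctions, fix a good $E$ with conjugation $B(x+\alpha)S_{v,E}(x)B(x)^{-1}=\diag(e^{2\pi i\rho(E)},e^{-2\pi i\rho(E)})$; the two columns of $B^{-1}$ yield $L^2$-sections $(a^\pm_E,b^\pm_E)^T:\T^d\to\C^2$ satisfying $S_{v,E}(x)(a^\pm_E,b^\pm_E)^T(x)=e^{\pm 2\pi i\rho(E)}(a^\pm_E,b^\pm_E)^T(x+\alpha)$, and eliminating $b^\pm_E$ gives the Bloch equation $(E-v(x))a^\pm_E(x)=e^{\pm 2\pi i\rho(E)}a^\pm_E(x+\alpha)+e^{\mp 2\pi i\rho(E)}a^\pm_E(x-\alpha)$. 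Passing to Fourier coefficients on $\T^d$ turns this precisely into $\wt H(\pm\rho(E))\hat a^\pm_E=E\hat a^\pm_E$ in $l^2(\Z^d)$. Multiplying $a^\pm_E$ by $e^{-2\pi in\cdot x}$ for $n\in\Z^d$ shifts its Fourier support, so the translated sequence $\phi^{\pm,n,E}_k:=\hat a^\pm_{E,k-n}$ is an $l^2$-eigenfunction of $\wt H(\theta^{\pm,n}_E)$ at the same eigenvalue $E$, where $\theta^{\pm,n}_E:=\pm\rho(E)-n\cdot\alpha\bmod 1$.

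The crux is completeness. Aubry duality and Fubini give the identity $\int_\T\mu^{\wt H(\theta)}_{\delta_0}\,d\theta=dN$, since the common integrated density of states is the spectral measure of the constant section $\theta\mapsto\delta_0$ in the direct integral $\wt{\mathfrak H}$. For each $\theta$ I define the candidate pure point measure $\nu_\theta$ by placing, at each eigenvalue $E=E^{\pm,n}(\theta)$ determined by $\theta^{\pm,n}_E=\theta$, a point mass of weight $|\hat a^\pm_{E,-n}|^2/\|a^\pm_E\|_{L^2}^2$. For a.e.\ $\theta$ the energies $E^{\pm,n}(\theta)$ are pairwise distinct (by incommensurability of $\alpha$ and injectivity of $\rho$ on the spectrum), so the associated normalized eigenfunctions are orthogonal and $\nu_\theta\le\mu^{\wt H(\theta)}_{\delta_0,pp}\le\mu^{\wt H(\theta)}_{\delta_0}$. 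Then $\int_\T\nu_\theta\,d\theta$ is computed by Fubini: for each fixed $(\pm,n)$ change variables from $\theta$ to $E$ using $dN=-2\,d\rho$, and collapse the sum over $n$ via Parseval, $\sum_{n\in\Z^d}|\hat a^\pm_{E,-n}|^2=\|a^\pm_E\|_{L^2}^2$, which cancels the denominator; the two sign branches fill in both halves of the folding $\rho\in[0,1/2]$ and the total integral is exactly $dN$. Comparing with the Aubry identity and using $\nu_\theta\le\mu^{\wt H(\theta)}_{\delta_0}$ forces $\nu_\theta=\mu^{\wt H(\theta)}_{\delta_0}$ for a.e.\ $\theta$; in particular $\mu^{\wt H(\theta)}_{\delta_0}$ is pure point.

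Finally, the covariance $\wt H(\theta+\alpha\cdot n)=S_n^{-1}\wt H(\theta)S_n$ together with $S_n\delta_0=\delta_n$ transfers pure pointness of the spectral measure from $\delta_0$ to that of every $\delta_m$ on a full-measure subset of $\T$; intersecting over the countable family $m\in\Z^d$ and invoking the orthonormal basis $\{\delta_m\}$ of $l^2(\Z^d)$ yields pure point spectrum of $\wt H(\theta)$ for a.e.\ $\theta$. I expect the main technical difficulty to lie in the completeness step: tracking the folding of $\rho$ onto $[0,1/2]$ through both sign branches, discarding the null $\theta$-set on which distinct $(\pm,n)$ produce the same energy, and justifying the Fubini/change-of-variables chain that reduces everything to the single identity $\int_\T\nu_\theta\,d\theta=dN$.
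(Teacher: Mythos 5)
Your proposal is correct and follows essentially the same route as the paper: eigenfunctions of $\wt H$ are extracted from the columns of $B^{-1}$ via Fourier transform, and completeness is forced by showing that the point masses they contribute to the spectral measure of $\delta_0$ integrate over $\theta$ to total mass $1$ (Parseval over the translates), which saturates the density of states. The paper merely packages this completeness step as an abstract lemma on covariant families with a non-measurable eigenvector selection (Theorem \ref{main_ergodic}), and its Remark following that lemma gives exactly your ``partial density of states equals full density of states'' interpretation.
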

The following fact is standard and shows that the notion of degree 0 reducibility is needed only in discontinuous setting.
\begin{prop}
	Suppose $(\alpha,A)$ is a continuously reducible cocycle such that \eqref{reducibility} holds with $A_{\star}$ being a rotation matrix. Then it is $L^2$ (and continuously) degree 0 reducible.
\end{prop}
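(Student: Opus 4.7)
The plan is to diagonalize the constant rotation matrix $A_{\star}$ using a constant unitary change of basis, and then to adjust the resulting diagonal cocycle so that its exponent matches the fibered rotation number, as required by \eqref{deg0}.

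Write $A_{\star}=R_{\psi}$ for some $\psi\in\R$. Its eigenvalues are $e^{\pm 2\pi i\psi}$, and one can pick a constant matrix $U\in\mathrm{U}(2)$ with $|\det U|=1$ (for instance $U=\frac{1}{\sqrt{2}}\begin{pmatrix}1&i\\ 1&-i\end{pmatrix}$) such that $U A_{\star} U^{-1}=\diag(e^{2\pi i\psi},e^{-2\pi i\psi})$. Setting $\wt B(x):=UB(x)$ produces a continuous map $\wt B\colon\T^d\to\GL(2,\C)$ with $|\det\wt B(x)|=1$ for every $x$, and a direct computation gives
\[
\wt B(x+\alpha)\,A(x)\,\wt B(x)^{-1}=U\,A_{\star}\,U^{-1}=\diag\bigl(e^{2\pi i\psi},e^{-2\pi i\psi}\bigr).
\]
This already has the form of \eqref{deg0}, with the sole remaining issue that the exponent $\psi$ must be identified, modulo $\Z$, with the fibered rotation number $\rho(\alpha,A)$.

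Since $(\alpha,A)$ is continuously $\sl2r$-conjugate (via $B$) to the constant cocycle $(\alpha,R_{\psi})$, whose fibered rotation number equals $\psi\bmod\Z$, the standard theory of fibered rotation numbers under continuous $\sl2r$-valued conjugation yields $\psi\equiv\rho(\alpha,A)+\langle k,\alpha\rangle\pmod{\Z}$ for some $k\in\Z^d$ determined by the homotopy class of $B\colon\T^d\to\sl2r$. We absorb this shift by further left-multiplying $\wt B$ by the continuous diagonal factor $\diag(e^{-2\pi i\langle k,x\rangle},e^{2\pi i\langle k,x\rangle})$, which has modulus-one determinant and replaces $\psi$ by $\psi-\langle k,\alpha\rangle$. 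A possible overall sign ambiguity between $\psi$ and $\rho(\alpha,A)$ (coming from the two orderings of the eigenspaces) is absorbed by swapping the two rows of $\wt B$. The only delicate point in the argument is the identification $\psi\equiv\rho(\alpha,A)\pmod{\Z+\alpha_1\Z+\cdots+\alpha_d\Z}$ used above; once that is in hand, the remainder is a direct computation, and the resulting $\wt B$ is continuous with $|\det\wt B|=1$, so the reducibility is simultaneously continuous and of degree $0$.
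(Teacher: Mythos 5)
Your argument is correct and follows essentially the same route as the paper's: both identify the rotation angle of $A_{\star}$ with $\rho(\alpha,A)$ modulo $\Z+\alpha_1\Z+\cdots+\alpha_d\Z$ via the homotopy class (degree) of $B$, remove the degree by multiplying $B$ by a rotation-valued (equivalently, complex diagonal) factor of degree $-k$, and diagonalize the resulting constant rotation over $\C$ by a constant matrix of unit-modulus determinant. The only difference is the order of operations (you diagonalize first and correct the degree afterwards, while the paper normalizes $B$ to be homotopic to a constant first), which is immaterial.
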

\begin{proof}
	Assume that \eqref{reducibility} holds with $B$ continuous and homotopic to a constant matrix. Since such conjugations $B$ preserve rotation numbers, the matrix $A_{\star}$ will be a rotation by the rotation number of $(\alpha,A)$, and \eqref{deg0} can be obtained by diagonalizing $A_{\star}$ over $\C$ (the diagonalization is another conjugation with a constant complex matrix $B$).
	
	In general, any continuous map from $B\colon \T^d \to \mathrm{SL}(2,\R)$ is homotopic to a rotation $R_{n\cdot x}$ for some $n\in \Z^d$. If $B\colon \T^d\to \mathrm{SL}(2,\R)$ satisfies \eqref{reducibility} and is homotopic to $R_{nx}$, then $R_{-n\cdot x}B(x)$ is homotopic to a constant and also satisfies \eqref{reducibility}.
\end{proof}

%\begin{defi}
%	A frequency $\alpha\in \R\setminus\Q$ satisfies a {\it recurrent Diophantine condition} (RDC) if, for some $k,\tau>0$, there are infinitely many $n\in \N$ such that $G^n(\alpha)$ is Diophantine with the parameters $k,\tau$ (i. e. $|q\alpha-p|>k|q|^{-\tau}$ for all $p,q\in \Z$, $q\neq 0$). Here $G(x)=\{x^{-1}\}$ is the Gauss map.
%\end{defi}
\begin{cor}
	Suppose that $v$ is real analytic and $d=1$, $\alpha$ is Diophantine, and 
	that the spectrum of the original operator \eqref{1} is purely absolutely continuous for almost all $x$. Then the spectrum of the dual operator \eqref{dual} is purely point for almost all $\theta$, and 
	the eigenfunctions decay exponentially.
\end{cor}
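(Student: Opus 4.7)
The strategy is to reduce the corollary to Theorem \ref{main} by verifying its hypothesis with the help of the reducibility theorem of Avila--Fayad--Krikorian \cite{AFK}. So the task is to show that, under the assumptions of the corollary, for almost every $E\in\Sigma$ with respect to the density of states measure, the Schr\"odinger cocycle $S_{v,E}$ is $L^2$-degree $0$ reducible (in fact, analytically so), and that the eigenfunctions produced by Theorem \ref{main} inherit exponential decay from the analyticity of the conjugation.

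First, I would use Kotani theory: since the spectrum is purely absolutely continuous for almost every $x$, the Lyapunov exponent $L(E)$ of $S_{v,E}$ vanishes for almost every $E\in\Sigma$ with respect to the density of states measure. Next, since $v$ is real analytic and $\alpha$ is Diophantine, I would invoke the reducibility theorem from \cite{AFK}, which guarantees that for almost every energy $E$ in $\Sigma$ at which $L(E)=0$, the cocycle $(\alpha,S_{v,E})$ is analytically reducible. Combining these two facts, $S_{v,E}$ is analytically reducible for almost every $E\in\Sigma$ with respect to the density of states measure.

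Since $L(E)=0$ on the set in question, the constant matrix $A_{\star}$ to which $S_{v,E}$ is conjugated must be elliptic, hence can be taken to be a rotation. The analytic conjugation in \cite{AFK} is homotopic to the identity (on the one-dimensional torus, the homotopy type can be corrected by multiplying by a rotation $R_{n\cdot x}$ as in the proof of the proposition above, at the cost of an integer shift in the rotation number), so the rotation number is preserved, and $A_{\star}=R_{\rho(E)}$. Diagonalizing over $\C$ via a constant matrix converts this into the form \eqref{deg0}, so the cocycle is $L^2$-degree $0$ reducible (in fact, continuously, with an analytic conjugation). Applying Theorem \ref{main} now yields that $\wt H(\theta)$ has pure point spectrum for almost every $\theta\in\T$.

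For the exponential decay of the eigenfunctions, I would use that the eigenfunctions of $\wt H(\theta)$ constructed via the duality \eqref{Duality} arise as Fourier coefficients of the Bloch wave solutions generated by the columns of the conjugation matrix $B(x)$. Because $B$ is real analytic on $\T$, its Fourier coefficients decay exponentially at a rate controlled by the strip of analyticity, which directly yields exponential decay of the corresponding eigenfunctions of the dual operator. The main technical point, and the step where care is needed, is ensuring that the AFK reducibility gives an analytic (and not merely measurable) conjugation in the full-measure set of energies relevant to the duality bookkeeping in Theorem \ref{main}; once that is in hand, the remaining steps are essentially bookkeeping based on the proposition and on the explicit form of the unitary $\mathcal U$.
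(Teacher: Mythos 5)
Your proposal is correct and follows essentially the same route as the paper: verify the hypotheses of Theorem \ref{main} by combining Kotani theory with the reducibility theory of \cite{AFK} under the Diophantine condition, then read off exponential decay from analyticity of the conjugation. The only slight imprecision is in the division of labor between references: \cite{AFK} yields rotations reducibility (an analytic conjugation to a cocycle of generally non-constant rotations) for Lebesgue-a.e.\ $E\in\Sigma$ for \emph{all} irrational $\alpha$, and it is the separate step of solving the resulting cohomological equation under the Diophantine assumption (the paper cites \cite{Puig}) that upgrades this to reducibility to a constant rotation.
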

\begin{proof}
	From \cite[Theorem 1.2]{AFK}, it follows that for Lebesgue almost all $E\in\Sigma$ the cocycle $(\alpha,S_{v,E})$ is rotations reducible. If $\alpha$ is Diophantine, it will also be (analytically) reducible to 	a constant rotation (see, for example, \cite{Puig}). Hence, we fall under the assumptions of 
	Theorem \ref{main}.
\end{proof}
We can also formulate the following (and now elementary) result on multi-dimensional localization.
\begin{theorem}
	\label{multidim}
	If $\alpha$	is Diophantine, then there exists $\lambda_0(\alpha,v)$ such that the long-range lattice operator
	$$
	(\wt H(\theta)\psi)_m=\sum\limits_{m'\in \Z^d}\hat v_{m'} \psi_{m-m'}+2\lambda\cos 2 \pi(\alpha\cdot m+\theta)\psi_m
	$$
	has purely point spectrum for $\lambda>\lambda_0(\alpha,v)$ and almost all $\theta\in \T$.
\end{theorem}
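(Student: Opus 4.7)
The plan is to identify $\lambda^{-1}\wt H(\theta)$ as the Aubry dual of a small-coupling $d$-frequency one-dimensional Schr\"odinger operator and then invoke Theorem \ref{main}. Dividing the long-range operator through by $\lambda$ yields
\[
\frac{1}{\lambda}\wt H(\theta)\psi_m = \sum_{m' \in \Z^d}\frac{\hat v_{m'}}{\lambda}\psi_{m-m'} + 2\cos 2\pi(\alpha\cdot m+\theta)\psi_m,
\]
which is exactly \eqref{dual} with $v$ replaced by $v/\lambda$. Hence $\lambda^{-1}\wt H(\theta)$ is the Aubry dual of
\[
H_\lambda(x)\psi_n = \psi_{n+1}+\psi_{n-1}+\frac{v(x+n\alpha)}{\lambda}\psi_n.
\]
Since multiplication by the positive scalar $\lambda$ preserves pure point spectrum, it suffices to check that $H_\lambda$ satisfies the hypotheses of Theorem \ref{main} for $\lambda$ large enough.

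Assuming $v$ is real analytic (tacit from the use of the Fourier coefficients $\hat v_{m'}$ throughout), for $\lambda \ge \lambda_0(\alpha,v)$ the potential $v/\lambda$ becomes arbitrarily small in an analytic norm. I would then invoke an Eliasson-type perturbative KAM reducibility theorem in the multi-frequency setting: for Diophantine $\alpha$ and sufficiently small analytic potential, the Schr\"odinger cocycle $(\alpha, S_{v/\lambda, E})$ is analytically reducible to a constant rotation $R_{\rho(E)}$ for $E$ outside a set of density-of-states measure zero inside $\Sigma_\lambda$. By the Proposition immediately following Theorem \ref{main}, continuous reducibility to a rotation upgrades automatically to $L^2$-degree $0$ reducibility, so the hypotheses of Theorem \ref{main} are met for $H_\lambda$.

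Applying Theorem \ref{main} to $H_\lambda$ then gives pure point spectrum of $\lambda^{-1}\wt H(\theta)$, and hence of $\wt H(\theta)$, for almost every $\theta \in \T$. The main obstacle is the reducibility input in the second step: in the single-frequency case this is essentially the content of the preceding corollary (combined with \cite[Theorem 1.2]{AFK} and the argument of Puig in the perturbative regime), but for $d\ge 2$ one must appeal to a genuinely multi-frequency perturbative KAM reducibility theorem of Eliasson type, valid for sufficiently small analytic perturbations with Diophantine frequency. Once this reducibility input is secured, the remainder of the proof is an immediate application of Theorem \ref{main} combined with the duality rescaling above.
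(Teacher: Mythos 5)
Your proposal is correct and follows essentially the same route as the paper: rescale by $\lambda$ to view $\wt H(\theta)/\lambda$ as the dual of the small-coupling operator with potential $v/\lambda$, invoke Eliasson's perturbative reducibility theorem (which, as in \cite{Eli,Amor}, does hold in the multi-frequency Diophantine setting you flag as the key input) for almost every rotation number, and apply Theorem \ref{main}. The only difference is that you spell out the duality rescaling explicitly, which the paper leaves implicit.
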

\begin{proof}
	By Eliasson's perturbative reducibility theorem (see \cite{Eli,Amor}), the cocycle $(\alpha,S_{\lambda^{-1} v,E})$ is continuously (even analytically) reducible for $\lambda>\lambda_0(\alpha,v)$ for almost all values of the rotation number. 
	Hence, it satisfies the hypothesis of Theorem \ref{main}.
\end{proof}
\begin{remark}
	We are not aware of other multidimensional localization results with purely arithmetic 
	conditions on the frequency. In \cite{CD}, Theorem \ref{multidim} was proved for general $C^2$ potentials with two critical points (rather than for $\cos x$). Long range operators were also considered. 
	However, the result was obtained for an unspecified full measure set of frequencies. In the book \cite{Bbook}, a possibility of alternative proof was mentioned (for general analytic potentials), with a part of the proof only requiring the Diophantine condition on $\alpha$. 
\end{remark}
\begin{remark}
	For every $v$ and for almost every $E$ among those for which the 
	Lyapunov exponent of the Schr\"odinger cocycle 	$(\alpha,S_{v,E})$ is zero, this cocycle can be $L^2$-conjugated to a cocycle of rotations $R_{\phi(x)}$, see \cite{Kot,DS}, or \cite[Theorem 2.1]{AK1} with a reference to \cite{Si1}. Further conjugation to a constant rotation, such as in the assumption of Theorem \ref{main}, requires solving a cohomological equation $\varphi(x+\alpha)-\varphi(x)=\phi(x)\mod \Z$.
\end{remark}
In the remaining two sections we prove Theorem \ref{main}.
\section{Covariant operator families with many eigenvectors}

\noindent  In this section, we make the following observation: if, for 
a covariant operator family $\{H(\omega)\}$, we have found a single eigenvector of $H(\omega)$ 
for each $\omega$, and the eigenvectors for different $\omega$ on the same 
trajectory are essentially different (that is, not obtained from each other by translation), then, for almost every $\omega$, the eigenvectors of $H(\omega)$ obtained by such translations form a complete set.

Let $(\Omega,d\omega)$ be a measurable space with a Borel probability measure, equipped with a family
$\{T^n,n\in \Z^d\}$ of measure-preserving one-to-one 
transformations of $\Omega$ such that $T^{m+n}=T^m T^n$. Let $\{H(\omega),\omega\in \Omega\}$ be a weakly measurable operator family of bounded operators in $l^2(\Z^d)$, with the property $H(T^n\omega)=T_n H(\omega)T_{-n}$, where $T_n$ is the translation in $\Z^d$ by the vector $n$. We call $\{H(\omega)\}$ a {\it covariant operator family with many eigenvectors} if
\begin{enumerate}
	\item there exists a Borel measurable function $E\colon \Omega\to \R$ such that for almost every $\omega\in \Omega$ the equality $E(T^n \omega)=E(T^m\omega)$ holds only for $m=n$. 
	\footnote{In other words, $E$ should be one-to-one on almost 
		all trajectories. However, the wording ``almost all trajectories'' is ambiguous as there is no natural measure on the set of trajectories.}
	\item There exists a function $u\colon \Omega\to l^2(\Z^d)$, {\it not necessarily measurable}, such that for almost every $\omega\in \Omega$ we have $H(\omega)u(\omega)=E(\omega)u(\omega)$ and $\|u(\omega)\|_{l^2(\Z^d)}=1$.
\end{enumerate}

\begin{theorem}
	\label{main_ergodic}
	Suppose that $\{H(\omega)\}$ is a covariant operator family in $\Z^d$ with many eigenvectors. Then, for almost every $\omega$, the operator $H(\omega)$ has purely point spectrum with eigenvalues $E(T^{-n}\omega)$ and eigenvectors $T_{n}u(T^{-n}\omega)$.
\end{theorem}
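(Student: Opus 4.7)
The plan is to reformulate the completeness of the orthonormal system $\{T_n u(T^{-n}\omega)\}_{n\in\Z^d}$ as an $L^2$-integral identity along the orbit, which can then be attacked via Tonelli's theorem. The central obstacle is that $u$ is \emph{not} measurable, so the identity cannot be evaluated directly; this is bypassed by first replacing $u$ by a measurable unit eigenvector $\tilde u$ and reducing the original statement to a 1-dimensionality argument at the end.

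First, covariance $H(T^n\omega)=T_n H(\omega)T_{-n}$ gives that $T_n u(T^{-n}\omega)$ is a unit eigenvector of $H(\omega)$ at eigenvalue $E(T^{-n}\omega)$; assumption (1) together with self-adjointness of $H(\omega)$ then makes this collection orthonormal. To obtain a \emph{measurable} eigenvector, note that the spectral projection $\Pi(\omega)=\mathbf{1}_{\{E(\omega)\}}(H(\omega))$ is a strong/weak limit of $\mathbf{1}_{[-\varepsilon,\varepsilon]}(H(\omega)-E(\omega)I)$ as $\varepsilon\to 0^+$, hence weakly measurable; moreover $\Pi(\omega)\ne 0$ a.s.\ because $u(\omega)\in\Ran\Pi(\omega)$. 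Fixing an enumeration of $\Z^d$ and setting $\tilde u(\omega):=\Pi(\omega)\delta_{n(\omega)}/\|\Pi(\omega)\delta_{n(\omega)}\|$ with $n(\omega)$ the first index for which $\Pi(\omega)\delta_n\ne 0$ produces a measurable unit eigenvector $\tilde u(\omega)$ of $H(\omega)$ at $E(\omega)$.

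Define $\tilde P(\omega):=\sum_{n\in\Z^d}|T_n\tilde u(T^{-n}\omega)\>\<T_n\tilde u(T^{-n}\omega)|$, the orthogonal projection onto the closed span of the (orthonormal) translated family. Since $\tilde u$ is measurable, $\tilde P$ is weakly measurable and $\tilde P(\omega)\le I$, so $\<\delta_0,\tilde P(\omega)\delta_0\>\le 1$. The key computation combines Tonelli with invariance of $d\omega$ under $T^n$:
$$
\int_\Omega \<\delta_0,\tilde P(\omega)\delta_0\>\,d\omega=\sum_n\int_\Omega|\tilde u(T^{-n}\omega)(-n)|^2\,d\omega=\sum_n\int_\Omega|\tilde u(\omega)(-n)|^2\,d\omega=\int_\Omega\|\tilde u(\omega)\|_{l^2(\Z^d)}^2\,d\omega=1.
$$
Comparison with $\<\delta_0,\tilde P(\omega)\delta_0\>\le 1$ forces equality almost surely. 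Applying the covariance $\tilde P(T^m\omega)=T_m\tilde P(\omega)T_{-m}$ moves this to $\<\delta_m,\tilde P(\omega)\delta_m\>=1$ a.s.\ for each $m\in\Z^d$, and a countable intersection yields $\tilde P(\omega)=I$ a.s.

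Finally, completeness of $\{T_n\tilde u(T^{-n}\omega)\}$ together with distinctness of the eigenvalues $E(T^{-n}\omega)$ (assumption 1) forces every eigenspace of $H(\omega)$ to be one-dimensional; in particular $u(\omega)$, which lies in the eigenspace at $E(\omega)$, equals $\tilde u(\omega)$ up to a unimodular factor, so $\{T_n u(T^{-n}\omega)\}$ spans the same one-dimensional eigenspaces and is itself a complete orthonormal system of eigenvectors of $H(\omega)$. The main difficulty throughout is the non-measurability of $u$; the measurable-selection detour is precisely what legitimises the Tonelli manipulation that drives the entire argument, while the final one-dimensionality step is what lets us transfer the conclusion from $\tilde u$ back to the given $u$.
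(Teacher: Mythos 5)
Your argument is correct, and its engine is the same as the paper's: integrate the $\delta_0$-diagonal entry of a covariant sum of eigenprojections, use Tonelli and the $T$-invariance of $d\omega$ to show the integral equals $1$, deduce almost-sure equality from the pointwise bound by $1$, and finish with covariance plus a countable intersection. Where you genuinely diverge is in how the non-measurability of $u$ is neutralized. The paper never selects a measurable eigenvector: it works with the full spectral projections $P_k(\omega)$ of $H(\omega)$ at $E(T^{-k}\omega)$, which are measurable for free, and uses $u$ only through the pointwise bound $(P_k(T^k\omega)\delta_l,\delta_l)\ge|(T_ku(\omega),\delta_l)|^2$, whose sum over $k$ is identically $\|u(\omega)\|^2=1$ and hence integrates with no measurability of $u$ required; the forced equality for every $l$ then makes each $P_k$ rank one, so simplicity falls out of the same computation. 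You instead first build a measurable unit eigenvector $\tilde u$ by selection from $\Ran \mathbf{1}_{\{E(\omega)\}}(H(\omega))$, run the identity with rank-one projections, and transfer back to $u$ via one-dimensionality at the end. Both are sound; the paper's version is leaner and obtains measurability of the eigenvectors as a corollary rather than an input (as its closing remark notes), while yours makes the measurable-selection step explicit, at the cost of having to justify weak measurability of $\omega\mapsto\mathbf{1}_{\{E(\omega)\}}(H(\omega))$ with the spectral parameter itself varying in $\omega$ --- standard, but exactly the point the paper's trick avoids.
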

\begin{proof}
	Let $E_k(\omega):=E(T^{-k}\omega)$, and $P_k(\omega)$ be the spectral projection of $H(\omega)$ onto the 
	eigenspace corresponding to $E_k(\omega)$. Both these functions are weakly measurable in $\omega$. For almost all $\omega$, we have for all $k\in \Z^d$
	$$
	H(T^k \omega) T_k u(\omega)=T_k H(\omega)u(\omega)=E(\omega)T_k u(\omega)=E_k(T^k\omega) T_k u(\omega).
	$$
	Hence, $T_k u(\omega)$ belongs to the range of $P_k(T^k \omega)$, and, for each basis vector $\delta_l\in l^2(\Z^d)$, we have
	\beq
	\label{inequality}
	(P_k(T^k\omega)\delta_l,\delta_l)\ge |(T_k u(\omega),\delta_l)|^2.
	\eeq
	Let $P(\omega):=\sum_{k\in \Z^d}P_k(\omega)$. By assumption, all $E_k(\omega)$ are different (for almost all $\omega$), and hence $P(\omega)$ is a projection. We have
	\begin{multline}
	\label{equalone}
	\int\limits_{\Omega} (P(\omega)\delta_l,\delta_l)\,d\omega=\int\limits_{\Omega}\sum\limits_{k\in \Z^d}(P_k(\omega)\delta_l,\delta_l)\,d\omega= \\
	=\int\limits_{\Omega} \sum\limits_{k\in \Z^d} (P_k(T^k\omega)\delta_l,\delta_l)\,d\omega \ge 
	\int\limits_{\Omega}\sum_{k\in \Z^d} |(T_k u(\omega),\delta_l)|^2\,d\omega=1.
	\end{multline}
	Since the left hand side is bounded by 1, the last inequality must be an equality, and hence, for any 
	$l$, the inequality \eqref{inequality} must also be an equality for almost every $\omega$. By passing to the countable intersection of these sets of $\omega$, we ultimately obtain a full measure set for which $P(\omega)=I$, and hence the eigenfunctions from the statement of the theorem indeed form a complete set.
\end{proof}
\begin{remark}
	The left hand side of \eqref{equalone} is the ``partial density of states measure'' of $\Omega$ 
	in the sense that it is the density of states obtained only from a part $P(\omega)$ of the spectral 
	measure. The fact that this measure is still equal to 1 implies that, for almost every $\omega$, 
	that part was actually the full spectral measure, which is equivalent to completeness of the eigenfunctions. The result 
	also implies simplicity of the point spectrum and that the function $u(\cdot)$ can actually 
	be replaced by a measurable function with the same properties (because spectral projections are measurable functions, and, due to simplicity, one can choose measurable branches of eigenfunctions; see \cite{GJLS} for details).
\end{remark}

\section{Proof of Theorem \ref{main}}

Let $\Theta_r^+$ be the set of all $\theta\in [0,1/2]$ such that $\rho(E)=\theta$ 
for some $E\in \Sigma$ with $S_{v,E}$ being $L^2$-degree 0 reducible, and $2\theta\notin \alpha_1\Z+\ldots+\alpha_d\Z+\Z$. By the assumptions, this set has full Lebesgue measure in $[0,1/2]$. Moreover, for $\theta\in \Theta_r^+$ there exists exactly one $E$ such that $\rho(E)=\theta$, because $\rho$ is nonincreasing and can be constant only on intervals contained in $\R\setminus \Sigma$, on which the rotation number must be rationally dependent with $\alpha$, see Proposition \ref{rational}. Let us denote this inverse function by $E(\theta)$, initially defined on $\Theta_r^+$, extend it evenly onto $-\Theta_r^+$, and then 1-periodically onto $\R$. Let us now consider
$$
f(x,\theta)=\begin{cases} B_{11}^{-1}(x,E(\theta))/\|B_{11}^{-1}(\cdot,E(\theta))\|_{L^2(\T)},&\theta\in [0,1/2]\\
B_{12}^{-1}(x,E(-\theta))/\|B_{12}^{-1}(\cdot,E(-\theta))\|_{L^2(\T)},&\theta\in [-1/2,0),
\end{cases}
$$
and then also extend $f(x,\theta)$ 1-periodically over $\theta$. Denote the domain in $\theta$ of both functions by $\Theta_r$ which is a similar (even, and then periodic) extension of $\Theta_r^+$. For $\theta\in \Theta_r$, the function $f$ generates Bloch solutions
$$
\psi(x,\theta)_n=e^{2\pi i n \theta}f(x+n\alpha,\theta),\quad n\in \Z,
$$
which, for almost all $x$, are formal solutions of the eigenvalue equation $(H(x)\psi)_n=E(\theta)\psi_n$. It is also easy to see that, if $u(\theta)_m=\hat f(m,\theta)$ is the Fourier transform of $f$ in the variable $x$, then
$$
(\wt H(\theta)u(\theta))_m=E(\theta) u(\theta)_m,\quad m\in \Z^d.
$$
Finally, $E(\theta+k\cdot \alpha)=E(\theta+l\cdot\alpha)$ with $k\neq l$ can only happen if $2\theta\in \Z+\Z\alpha_1+\ldots+\Z\alpha_d$, 
and so, the function $E(\cdot)$ satisfies the assumptions of Theorem \ref{main_ergodic}, from which the result follows.
\qed
\begin{remark}
	For almost all $\theta$, we, in addition, can choose $f(x,-\theta)=\overline{{f(x,\theta)}}$. This is a standard argument (see, e.g.,  \cite{Puig}, \cite{tenmart}, or \cite{Avila_l2} in the discontinuous setting).
	Let
	$$
	F(x,\theta)=\begin{pmatrix}
	f(x,\theta)&\overline{f(x,\theta)}\\
	e^{-2\pi i \theta }f(x-\alpha,\theta) & 	e^{2\pi i \theta }\overline{f(x-\alpha,\theta)}
	\end{pmatrix}.
	$$
	From the equation, it is easy to see that $S_{v,E}F(x,\theta)=F(x+\alpha,\theta)A_{\star}$, and hence 
	$d(x,\theta)=\det F(x,\theta)$ is a function of $x$ invariant under $x\mapsto x+\alpha$. Therefore, it is almost surely a constant function of $x$. 
	If $d(x,\theta)\neq 0$, then we can simply choose $B(x)=F(x,\theta)^{-1}$. Suppose that $d(x,\theta)=0$. Then we have 
	$$
		f(x,\theta)=\varphi(x,\theta)\overline{f(x,\theta)},\quad e^{2\pi i \theta}f(x+\alpha,\theta)=\varphi(x,\theta)e^{-2\pi i \theta}\overline{f(x+\alpha,\theta)}.
	$$
	for some function $\varphi$ with $|\varphi(x,\theta)|=1$ and $\varphi(x+\alpha,\theta)=e^{-4\pi i \theta}\varphi(x,\theta)$. That is only possible if $2\theta$ is a rational multiple of $\alpha$ modulo $\Z$, which excludes at most countable set of $\theta$s.
\end{remark}

%\begin{remark}
%In \cite{CD}, it is shown that, for Diophantine $\alpha$ and $\lambda>\lambda_0(\alpha,v)$
%the operator
%$$
%\Delta+\lambda v(\alpha\cdot m+\theta)
%$$
%has pure point spectrum for almost all $\theta\in \T$ (it is assumed that $v\in C^2$ and has two critical %points; there is also a generalization with $\Delta$ replaced by a long range operator). In %\cite{Eli,Amor} it is shown that an analytic Schr\"odinger cocycle $S_{\lambda' w,E}$ with Diophantine %frequency is reducible for $\lambda'<\lambda_0'(\alpha,w)$ for almost all (more precisely, rational and %Diophantine) values of the rotation number. Our theorem \ref{main} shows that, for $v(x)=\cos(2\pi x)$, the result of 
%\cite{CD} follows from \cite{Eli}.
%\end{remark}

\section{Acknowledgements} We would like to thank Qi Zhou for useful
discussions. S.J. is a 2014--15 Simons Fellow. This research was partially
    supported by the NSF DMS--1401204. I.K. was supported by the AMS--Simons Travel Grant 2014--16. 
    We are grateful to the Isaac
    Newton Institute for Mathematical Sciences, Cambridge, for support
    and hospitality during the programme Periodic and Ergodic Spectral
    Problems where this paper was completed.

\end{document}